\numberwithin{equation}{section}
\numberwithin{equation}{section}
\numberwithin{equation}{section}
\newtheorem{thm}{Theorem}[section]
\newtheorem{cor}[thm]{Corollary}
\newtheorem{con}[thm]{Convention}
\newtheorem{lem}[thm]{Lemma}
\newtheorem{prop}[thm]{Proposition}
\newtheorem{defn}[thm]{Definition}
\newtheorem{rem}[thm]{Remark}
\newcommand{\Ann}{\operatorname{Ann}\,}
\newcommand{\Hom}{\operatorname{Hom}\,}
\newcommand{\Ext}{\operatorname{Ext}\,}
\newcommand{\Max}{\operatorname{Max}\,}
\newcommand{\Ass}{\operatorname{Ass}\,}
\newcommand{\Assh}{\operatorname{Assh}\,}
\newcommand{\grad}{\operatorname{grade}\,}
\renewcommand{\dim}{\operatorname{dim}\,}
\newcommand{\cd}{\operatorname{cd}\,}
\newcommand{\Min}{\operatorname{Min}\,}
\newcommand{\Sup}{\operatorname{Sup}\,}
\newcommand{\Z}{\mbox{Z}}
\newcommand{\fa}{\mathfrak{a}}
\newcommand{\fb}{\mathfrak{b}}
\newcommand{\fm}{\mathfrak{m}}
\newcommand{\fp}{\mathfrak{p}}
\newcommand{\fq}{\mathfrak{q}}
\newcommand{\fx}{\mathfrak{x}}
\newcommand{\fc}{\mathfrak{c}}
\newcommand{\LH}{Lichtenbaum-Hartshorne Theorem}
\begin{document}
\bibliographystyle{amsplain}


\title[ Cohomological dimension ]
 { Cohomological dimension with respect to the linked ideals}

\bibliographystyle{amsplain}

     \author[M. jahangiri]{Maryam jahangiri$^1$}
     \author[kh. sayyari]{khadijeh sayyari$^2$}

\address{$^{1, 2}$ Faculty of Mathematical Sciences and Computer,
Kharazmi University, Tehran, Iran.}

\email {jahangiri@khu.ac.ir, std-sayyari@khu.ac.ir}

\keywords{Linkage of ideals,  local cohomology modules}

 \subjclass[2010]{13C40, 13D45.}


\begin{abstract}

  Let $R$ be a commutative Noetherian ring.
 Using the new concept of linkage of ideals over a module, we show that if $\fa$ is an ideal of $R$ which is
 linked by the ideal $I$, then $\cd(\fa,R) \in \{ \grad \fa, \cd(\fa, H^{\grad \fa}_ {\fc} (R)) + \grad \fa\}, $
  where $\fc : = \bigcap_{\fp \in \Ass \frac{R}{I}- V(\fa)}\fp$. Also, it is shown that
for every ideal $\fb$ which
is geometrically linked with $\fa,$ $\cd(\fa, H^{\grad \fb}_ {\fb} (R))$  does not depend on $\fb$.
\end{abstract}

\maketitle

\bibliographystyle{amsplain}

\section{Introduction}

Let $R$ be a commutative
Noetherian ring, $\fa$   be an ideal  of $R$ and
$M$ be an $R$-module. For each $i\in \Z$, $H^i_{\fa} (M)$ denotes the $i$-th local cohomology module of $M$ with respect to $\fa$ (our terminology on local cohomology modules comes from \cite{BS}). Vanishing of these modules is an important problem in this topic and it attracts lots of interest, see for example \cite{L} and \cite{O}.
One of the most various invariants in local cohomology theory is
the cohomological dimension of   $M$ with respect to the
ideal $\fa$, i.e. $$\cd(\fa , M) : = \Sup \{ i \in \mathbb{N}_0|
H^i_{\fa} (M) \neq 0 \}.$$
In this paper, we consider the cohomological dimension of $M$ with respect to the "linked ideals" over it.

Following \cite{PS}, two proper ideals $\fa$ and $\fb$ in a Cohen-Macaulay
local ring $R$ is said to be linked if there is a regular sequence
$\underline{\fx}$ in their intersection such that $\fa =
(\underline{\fx}) :_R \fb$ and $\fb = (\underline{\fx}) :_R \fa$.
In a recent paper, \cite{JS}, the authors
introduced the concept of linkage of ideals over a module and
studied some of its basic properties. Let   $\fa$ and $\fb$ be two non-zero ideals of $R$ and
$M$ denotes a non-zero finitely generated $R$-module. Assume that
$\fa M\neq M\neq \fb M$  and let $I\subseteq \fa \cap \fb$ be an
ideal generating by an $M$-regular sequence. Then the ideals $\fa$
and $\fb$ are said to be linked by $I$ over $M$, denoted by
$\fa\sim_{(I;M)}\fb$, if $\fb M = IM:_M\fa$ and $\fa M = IM:_M\fb $.
  This concept is the classical concept of linkage of ideals in
\cite{PS}, where $M= R.$ Note that
these two concepts do not coincide \cite[2.6]{JS} although, in some cases they do
(e.g. Example \cite[2.4]{JS}).  We can also characterize linked ideals over $R,$ see \cite[2.7]{JS2}.

As an application of this generalization, one may characterize Cohen-Macaulay modules in terms of the type of linked ideals over it, see \cite[3.5]{JS2}.

  In this paper, we consider the above
generalization of linkage of ideals over a module and, among other
things, study the cohomological dimension of an $R$-module $M$ with
respect to the ideals which are linked over $M$. In particular, in
 Theorem \ref{t5} we show that if $\fa$ is an ideal of $R$ which is
 linked by $I$ over $M$, then $$\cd(\fa,M) \in \{ \grad_M \fa, \cd(\fa, H^{\grad_M \fa}_ {\fc} (M)) + \grad_M \fa\},
 $$
  where $\fc : = \bigcap_{\fp \in \Ass \frac{M}{IM}- V(\fa)}\fp$.

  And
in Corollary \ref{c2}  it is shown that
for every ideal $\fb$ which
is geometrically linked with $\fa$ over $M$, $\cd(\fa, H^{\grad_M \fb}_ {\fb} (M))$ is constant and does not depend on $\fb$.

Also, we show that if $\cd(\fb, R)< \dim(R)$ for any linked ideal  $\fb$ over $R$, then $\cd(\fa, R)< \dim(R)$
  for any ideal  $\fa$ (Corollary \ref{c1}).

Throughout the paper, $R$ denotes a commutative Noetherian ring with $1\neq 0$, $\fa$ and $\fb$ are two non-zero proper ideals
of $R$ and $M$ denotes a non-zero finitely generated $R$-module.

\section{ Cohomological dimension}


The cohomological dimension of an $R$-module $X$ with respect to $\fa$ is defined
by $$\cd(\fa , X) : = \Sup \{ i \in \mathbb{N}_0| H^i_{\fa} (X) \neq 0 \}.$$ It is a significant invariant in
local cohomology theory and attracts lots of interest, see for example \cite{L} and \cite{O}. In this section, we study
this invariant via "linkage". We begin by the definition of our main tool.

\begin{defn}\label{F1}
 \emph{Assume that $\fa M\neq M\neq \fb M$
 and let $I\subseteq \fa \cap \fb$ be an ideal generated by an $M$-regular sequence.
  Then we say that the ideals $\fa$ and $\fb$ are linked by $I$ over $M$,
  denoted $\fa\sim_{(I;M)}\fb$, if $\fb M = IM:_M\fa$ and $\fa M = IM:_M\fb $.
  The ideals $\fa$ and $\fb$ are said to be geometrically linked by $I$ over $M$ if $\fa M \cap \fb M =
  IM$. Also, we say that the ideal $\fa$ is linked over $M$ if there
  exist ideals $\fb$ and $I$ of $R$ such that $\fa\sim_{(I;M)}\fb$.
  $\fa$ is $M$-selflinked by $I$ if $\fa\sim_{(I;M)}\fa$.
  Note that in the case where $M = R$, this concept is the classical concept of linkage of ideals in \cite{PS}.}
\end{defn}
The following lemma, which will be used in the next proposition, finds some relations between local cohomology modules of $M$ with respect to ideals which are linked over $M.$

\begin{lem}\label{l07}
 Assume that $I$ is an ideal of $R$ such that $\fa\sim_{(I;M)}\fb$. Then
  \begin{itemize}
    \item [ (i) ] $\sqrt{I + \Ann M} = \sqrt{(\fa \cap \fb) + \Ann M}$. In particular, $H^i_{\fa \cap \fb} (M) \cong H^i_ I (M)$, for all $i$.
    \item [ (ii) ]Let $I=0$. Then, $\sqrt{0:_{\frac{R}{\Ann M}}\fa}= \sqrt{\frac{\Ann \fa M}{\Ann M}}= \sqrt{\frac{\fb + \Ann_R M}{\Ann_R M}}.$ Therefore, $H^ i _{\Ann_R M :_R \fa}(M)\cong H^ i _{\Ann \fa M}(M)\cong H^ i _{\fb}(M)$. In other words, if $M$ is faithful, then $H^ i _{\fb}(M)\cong H^ i _{0:_R \fa}(M)$.
  \end{itemize}
 \end{lem}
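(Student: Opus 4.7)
The proof of each part splits into a radical-equality step and an automatic cohomological consequence.

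For (i), the inclusion $\sqrt{I+\Ann M}\subseteq\sqrt{(\fa\cap \fb)+\Ann M}$ is immediate from $I\subseteq \fa\cap \fb$. For the reverse, I would pick $x\in \fa\cap \fb$ and feed it into the linkage relation $\fa M=IM:_M \fb$: since $x\in \fa$, for every $m\in M$ one has $xm\in \fa M$, hence $\fb\cdot xm\subseteq IM$; invoking then $x\in \fb$ yields $x^{2}m\in IM$, so $x^{2}M\subseteq IM$, i.e.\ $x^{2}\in IM:_R M$. The standard identity $\sqrt{IM:_R M}=\sqrt{I+\Ann M}$ (valid for finitely generated $M$, since $\Supp(M/IM)=V(I+\Ann M)$) then gives $x\in\sqrt{I+\Ann M}$. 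The isomorphism $H^{i}_{\fa\cap \fb}(M)\cong H^{i}_I(M)$ is a direct consequence of the general principle that $H^{i}_J(M)$ depends on $J$ only through $\sqrt{J+\Ann M}$.

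For (ii), the first equality $\sqrt{0:_{R/\Ann M}\fa}=\sqrt{\Ann(\fa M)/\Ann M}$ is pure unwinding: one checks $0:_{R/\Ann M}\fa=(\Ann M:_R\fa)/\Ann M$, and $r\in \Ann M:_R\fa$ iff $r\fa M=0$ iff $r\in \Ann_R(\fa M)$. For the second equality, I would reduce to the faithful case by passing to $R/\Ann M$, under which the linkage hypotheses $\fa M=0:_M\fb$ and $\fb M=0:_M\fa$ persist unchanged. The inclusion $\fb\subseteq \Ann(\fa M)$ is then immediate from $\fa M=0:_M\fb$. For the reverse, given $r\in \Ann(\fa M)$ with $M$ faithful, one has $r\fa\subseteq \Ann M=0$, so $rM$ is annihilated by $\fa$; the second linkage equation $0:_M\fa=\fb M$ then forces $rM\subseteq \fb M$, and the Cayley-Hamilton determinant trick produces a monic relation $r^{n}+b_{1}r^{n-1}+\cdots+b_{n}=0$ with each $b_{i}\in\fb^{i}$, giving $r^{n}\in\fb$. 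The displayed local-cohomology isomorphisms then follow for free: the first reflects the ring-level identity $\Ann_R M:_R\fa=\Ann_R(\fa M)$, and the second uses the radical equality just established together with the $\Ann M$-invariance of local cohomology; the faithful specialization is $0:_R\fa=\Ann_R M:_R\fa$.

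The crux is the reverse inclusion in (ii): both linkage equations enter essentially, the first to give $\fb\subseteq \Ann(\fa M)$ and the second to convert the ring-level condition $r\fa M=0$ into the module-level containment $rM\subseteq \fb M$ needed for the determinantal trick. Everything else is either straightforward unwinding of definitions or an appeal to standard radical and support identities for finitely generated modules.
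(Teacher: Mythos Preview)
Your proof is correct and follows essentially the same route as the paper's. In (i) the paper works with $r\in\fa\fb$ (noting $\fa\fb M\subseteq IM$ from the linkage relation) and then applies the Cayley--Hamilton determinant trick directly, whereas you take $x\in\fa\cap\fb$, obtain $x^{2}M\subseteq IM$, and invoke the support identity $\sqrt{IM:_R M}=\sqrt{I+\Ann M}$; these are equivalent packagings of the same idea. In (ii) the paper's argument is the same as yours---deduce $rM\subseteq \fb M$ from $r\fa M=0$ via $\fb M=0:_M\fa$, then apply the determinant trick---though you are more explicit in noting that $\Ann_R M:_R\fa=\Ann_R(\fa M)$ holds on the nose (not just up to radical), which cleanly closes the chain of inclusions.
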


\begin{proof}
\begin{itemize}
  \item [ (i) ] Let $r \in\fa \fb$. By the assumption, $rM \subseteq IM $. Therefore, in view of \cite[2.1]{M}, there exist an integer $n$ and $b_1 ,..., b_n \in I$ such that $(r^n + r^ {n-1}b_1 + ...+ b_n)M= 0$. This implies that $r+\Ann M \in \sqrt{\frac{I + \Ann M}{\Ann M}} $, as desired. Now, the result follows using \cite[4.2.1]{BS}.
  \item [ (ii) ] Let $r \in \Ann \fa M$. Then,
  by the assumption,  $rM \subseteq \fb M $ and using similar argument  in part (i) one can see that  $r+ \Ann M \in \sqrt{\frac{\fb + \Ann M}{\Ann M}}.$ Also, via the fact
  that $\fa \fb \subseteq \Ann M,$ $\frac{\fb + \Ann M}{\Ann M} \subseteq 0:_{\frac{R}{\Ann M}}\fa.$ This proves
  the desired equalities. Now, the isomorphisms between local cohomology modules follows using \cite [4.2.1]{BS}.

\end{itemize}

\end{proof}
\begin{prop}\label{t8}
 Let $I$ be an ideal of $R$ such that $\fa\sim_{(I;M)}\fb$ and set $t: = \grad_M I$. Then $ \cd (\fa + \fb, M) \leq  \Max \{ \cd(\fa, M), \cd(\fb, M), t+1 \}.$ Moreover, if $ \cd (\fa + \fb, M) \geq t+1,$ e.g. $\fa$ and $\fb$ are geometrically linked over $M$, then the equality holds.
 \end{prop}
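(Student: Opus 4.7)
My plan is to exploit the Mayer--Vietoris long exact sequence for local cohomology,
$$\cdots \to H^{i-1}_{\fa \cap \fb}(M) \to H^i_{\fa+\fb}(M) \to H^i_\fa(M) \oplus H^i_\fb(M) \to H^i_{\fa \cap \fb}(M) \to \cdots,$$
together with Lemma \ref{l07}(i), which identifies $H^i_{\fa \cap \fb}(M) \cong H^i_I(M)$ for every $i$. Because $I$ is generated by an $M$-regular sequence, whose length must equal $t := \grad_M I$, a standard Koszul computation forces $H^i_I(M) = 0$ for all $i \neq t$. This single vanishing statement will drive both halves of the proposition.

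For the upper bound, I pick $i > \Max\{\cd(\fa,M), \cd(\fb,M), t+1\}$. Then $H^{i-1}_{\fa\cap\fb}(M) \cong H^{i-1}_I(M) = 0$ (since $i-1 > t$) and also $H^i_\fa(M) \oplus H^i_\fb(M) = 0$, so Mayer--Vietoris squeezes $H^i_{\fa+\fb}(M)$ between zero terms. For the equality clause, set $c := \cd(\fa+\fb,M)$ and assume $c \geq t+1$. For any $i > c$, both $H^i_{\fa+\fb}(M)$ and $H^i_{\fa\cap\fb}(M) \cong H^i_I(M)$ vanish (the latter because $i > c \geq t+1 > t$), so the same sequence now forces $H^i_\fa(M) \oplus H^i_\fb(M) = 0$. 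Hence $\cd(\fa,M), \cd(\fb,M) \leq c$, and combining this with $c \geq t+1$ yields the reverse inequality, giving equality.

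It remains to verify that geometric linkage supplies the hypothesis $\cd(\fa+\fb, M) \geq t+1$. I plan to prove the stronger statement $\grad_M(\fa+\fb) \geq t+1$, which suffices since grade is bounded above by cohomological dimension. The geometric linkage condition $\fa M \cap \fb M = IM$ yields the short exact sequence
$$0 \to M/IM \to M/\fa M \oplus M/\fb M \to M/(\fa+\fb)M \to 0,$$
whence $\Ass_R(M/IM) \subseteq \Ass_R(M/\fa M) \cup \Ass_R(M/\fb M)$. By prime avoidance, it is enough to show that no associated prime $\fp$ of $M/IM$ contains $\fa+\fb$, because then one can extend the $M$-regular sequence generating $I$ by one more element of $\fa+\fb$. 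The principal obstacle lies exactly here: ruling out $\fp \supseteq \fa+\fb$ for, say, $\fp \in \Ass_R(M/\fa M)$ with witness $m \notin \fa M$ satisfying $\fp = \Ann_R(m + \fa M)$, will require exploiting the colon identities $\fa M = IM :_M \fb$ and $\fb M = IM :_M \fa$ to force $m$ back into $\fa M$ and derive a contradiction.
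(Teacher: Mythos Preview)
Your argument for both the inequality and the equality clause is correct and follows exactly the paper's approach: Mayer--Vietoris together with Lemma~\ref{l07}(i) and the vanishing $H^i_I(M)=0$ for all $i\neq t$. The paper's own proof of this proposition says no more than that.

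For the parenthetical ``e.g.''\ clause, the paper does not argue it here at all; the fact $\grad_M(\fa+\fb)=t+1$ under geometric linkage is only established later, in Proposition~\ref{t}, by a different method (reduce to $I=0$, localize, and show that $\grad_M(\fa+\fb)>1$ would force $M\cong\fa M\oplus\fb M$ and then $\fb M=0$). Your plan to show $\grad_M(\fa+\fb)\ge t+1$ directly is sound, but the detour through the short exact sequence and $\Ass_R(M/\fa M)$ is what manufactures your ``obstacle'': from $\fb m\subseteq\fa M$ you only get $m\in\fa M:_M\fb=IM:_M\fb^2$, which does not close. Work instead directly in $M/IM$. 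If $\fp\in\Ass_R(M/IM)$ with $\fa+\fb\subseteq\fp$, choose $m\in M\setminus IM$ with $\fp m\subseteq IM$. Then $\fa m\subseteq IM$ gives $m\in IM:_M\fa=\fb M$, and symmetrically $\fb m\subseteq IM$ gives $m\in IM:_M\fb=\fa M$; hence $m\in\fa M\cap\fb M=IM$ by geometric linkage, a contradiction. Thus $\fa+\fb$ is not contained in any associated prime of $M/IM$, so the $M$-regular sequence generating $I$ extends by an element of $\fa+\fb$, giving $\grad_M(\fa+\fb)\ge t+1$ and hence $\cd(\fa+\fb,M)\ge t+1$.
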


\begin{proof}
Note that $H^i_ I (M) = 0$ for all $i \neq t$, by \cite[1.3.9 and 3.3.1]{BS}. Now, the result follows from \ref{l07} and using the Mayer-Vietoris sequence

\begin{eqnarray*}... \longrightarrow H^i_ {\fa \cap \fb} (M) \longrightarrow H^{i+1}_ {\fa + \fb} (M)\longrightarrow H^{i+1}_ {\fa} (M) \oplus H^{i+1}_ {\fb} (M) \longrightarrow H^{i+1}_ {\fa\cap \fb}(M)\longrightarrow ... \textbf{.}\end{eqnarray*}

\end{proof}
The following corollary, which is immediate by the above
proposition, shows that, in spite of \cite[21.22]{e}, parts of an
$R$-regular sequence can not be linked over $R.$
\begin{cor}\label{c11}
Let $(R, \fm)$ be local and $x_1, ... , x_n \in \fm$ be an $R$-regular sequence, where $n \geq 4$. Then $(x_{i_1}, ... , x_{i_j})\nsim (x_{i_{j+1}}, ... , x_{i_{2j}})$, for all $1 < j \leq [\frac{n}{2}]$ and any permutation $(i_1,...,i_{2j}) $ of $\{ 1,..., 2j\}$ .
\end{cor}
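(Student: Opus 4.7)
The plan is to argue by contradiction using Proposition \ref{t8}. Suppose that
$$\fa := (x_{i_1},\ldots,x_{i_j}) \quad\text{and}\quad \fb := (x_{i_{j+1}},\ldots,x_{i_{2j}})$$
were linked, say $\fa \sim_{(I;R)} \fb$ for some ideal $I \subseteq \fa \cap \fb$ generated by an $R$-regular sequence, and set $t := \grad_R I$. I would aim to derive a numerical contradiction from the inequality of Proposition \ref{t8}.

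The preparatory step is to pin down the three cohomological dimensions that appear in Proposition \ref{t8}. Since $(R,\fm)$ is local, any rearrangement of a subsequence of the $R$-regular sequence $x_1,\ldots,x_n$ is still an $R$-regular sequence. Thus $\fa$ and $\fb$ are each generated by an $R$-regular sequence of length $j$, while $\fa + \fb = (x_{i_1},\ldots,x_{i_{2j}})$ is generated by one of length $2j$. Using the standard fact that for an ideal generated by an $R$-regular sequence of length $\ell$ the cohomological dimension equals $\ell$ (the upper bound coming from the \v{C}ech complex on $\ell$ generators, the lower bound from nonvanishing of local cohomology at the grade), I obtain $\cd(\fa,R)=\cd(\fb,R)=j$ and $\cd(\fa+\fb,R)=2j$. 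Moreover, since $I \subseteq \fa$, every $R$-regular sequence in $I$ lies in $\fa$, which gives $t = \grad_R I \leq \grad_R \fa = j$.

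Plugging these estimates into Proposition \ref{t8} with $M=R$ yields
$$2j \;=\; \cd(\fa+\fb, R) \;\leq\; \max\bigl\{\cd(\fa, R),\, \cd(\fb, R),\, t+1\bigr\} \;\leq\; \max\{j,\,j+1\} \;=\; j+1,$$
which forces $j\leq 1$ and contradicts the hypothesis $j > 1$. No case analysis on the permutation $(i_1,\ldots,i_{2j})$ of $\{1,\ldots,2j\}$ is needed; the only slightly non-trivial ingredient is the permutation-invariance of regular sequences in a local ring, which is what guarantees that $\fa$, $\fb$, and $\fa+\fb$ are genuinely generated by $R$-regular sequences of the advertised lengths, so I expect the write-up to be essentially a one-line application of Proposition \ref{t8}.
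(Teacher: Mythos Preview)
Your argument is correct and is exactly the approach the paper intends: the corollary is stated as ``immediate by the above proposition,'' and you have spelled out precisely that application of Proposition~\ref{t8}, computing $\cd(\fa,R)=\cd(\fb,R)=j$, $\cd(\fa+\fb,R)=2j$, and $t\le j$ to obtain the contradiction $2j\le j+1$.
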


Let $M \neq \fa M$. It is well-known,  by \cite[1.3.9]{BS}, that $\grad_M \fa \leq \cd(\fa, M).$ Then $M$ is said to be relative Cohen-Macaulay with respect to $\fa$ if $$\cd(\fa,M)= \grad_M \fa.$$

In the following proposition we compute the cohomological dimension of an $R$-module $M$ with respect to $\fa$ in two cases.
\begin{prop}\label{l5}
Let $I$ be an ideal of $R$ generating by an $M$-regular sequence of length $t$ and $\fa\sim_{(I;M)}\fb$.
\begin{itemize}
  \item [(i)] If $M$ is relative Cohen-Macaulay with respect to $\fa + \fb,$ then $H^i_{\fa} (M) =0$ for all $i \notin \{\grad_M \fa,\grad_M \fa+\fb \}.$
  \item [(ii)] If $I=0,$ then $\cd(\fa,M)= \cd(\fa, \frac{M}{\fb M}).$
\end{itemize}

 \end{prop}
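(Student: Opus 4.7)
My plan is to prove the two parts separately, each via a long exact sequence of local cohomology.

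\textbf{Part (i).} The key input is that $I$ is generated by an $M$-regular sequence of length $t$, so by \cite[1.3.9 and 3.3.1]{BS}, $H^i_I(M) = 0$ for every $i \neq t$; combined with Lemma \ref{l07}(i), this gives $H^i_{\fa\cap\fb}(M) = 0$ for $i \neq t$. Setting $g := \grad_M(\fa+\fb)$, the relative Cohen-Macaulay hypothesis says $H^j_{\fa+\fb}(M) = 0$ for $j \neq g$. Substituting these vanishings into the Mayer-Vietoris sequence
\[ \cdots \to H^i_{\fa+\fb}(M) \to H^i_\fa(M) \oplus H^i_\fb(M) \to H^i_{\fa\cap\fb}(M) \to H^{i+1}_{\fa+\fb}(M) \to \cdots, \]
one obtains $H^i_\fa(M) \oplus H^i_\fb(M) \cong H^i_{\fa+\fb}(M)$ whenever $i \notin \{t,t+1\}$, which already forces $H^i_\fa(M) = 0$ unless $i \in \{t, t+1, g\}$.

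To eliminate the excess index $t+1$, I would extract the five-term subsequence around $t$ and $t+1$ and argue in the three cases $g = t$, $g = t+1$, and $g \notin \{t,t+1\}$; a direct inspection shows $H^{t+1}_\fa(M) = 0$ except in the case $t+1 = g$. Finally, since $I \subseteq \fa$ contains an $M$-regular sequence of length $t$, one has $\grad_M \fa \geq t$; hence $H^i_\fa(M) = 0$ for $i < \grad_M \fa$ by \cite[1.3.9]{BS}, and in particular a nonzero $H^t_\fa(M)$ forces $\grad_M \fa = t$. Combining these steps yields $H^i_\fa(M) = 0$ for every $i \notin \{\grad_M \fa, g\}$. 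The principal obstacle is exactly this case analysis at the boundary indices $t$ and $t+1$, where the Mayer-Vietoris sequence does not split cleanly into short pieces.

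\textbf{Part (ii).} I would apply $H^\ast_\fa(-)$ to the short exact sequence
\[ 0 \to \fb M \to M \to M/\fb M \to 0. \]
Since $I = 0$, the linkage hypothesis gives $\fb M = 0:_M \fa$, so $\fa$ annihilates $\fb M$; consequently $\fb M$ is $\fa$-torsion and $H^i_\fa(\fb M) = 0$ for all $i \geq 1$. The associated long exact sequence then collapses to isomorphisms $H^i_\fa(M) \cong H^i_\fa(M/\fb M)$ for every $i \geq 1$, and the desired equality $\cd(\fa,M) = \cd(\fa, M/\fb M)$ follows immediately. This part is routine once one notices the crucial role of the identity $\fb M = 0:_M \fa$ provided by the linkage.
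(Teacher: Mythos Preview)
Your proposal is correct and follows essentially the same route as the paper. For (i), both you and the paper use Lemma~\ref{l07}(i) together with the vanishing $H^i_I(M)=0$ for $i\neq t$ to reduce the Mayer--Vietoris sequence to the isomorphisms $H^i_{\fa+\fb}(M)\cong H^i_\fa(M)\oplus H^i_\fb(M)$ for $i>t+1$ and the surjection at $i=t+1$; the paper simply states these two facts without the case analysis you outline, and implicitly uses $\grad_M\fa = t$ (a standard consequence of linkage) rather than deriving it as you do. For (ii), the paper's one-line proof ``$\fb M$ is $\fa$-torsion'' is exactly the observation $\fb M=0:_M\fa$ that you spell out, followed by the long exact sequence you describe.
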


\begin{proof}
\begin{itemize}
  \item [(i)] Using the assumption, $H^i_ {\fa \cap \fb} (M)= 0$ for all $i \neq t.$ Now, the result follows from the isomorphism $H^i_{\fa+\fb} (M) \cong H^i_{\fa} (M) \bigoplus H^i_{\fb} (M)$, for all $i>t+1$, and the surjective map $$H^{t+1}_ {\fa+\fb} (M)\longrightarrow H^{t+1}_ {\fa} (M) \oplus H^{t+1}_ {\fb} (M) \longrightarrow 0,$$ which are deduced by the Mayer-Vietoris sequence.
  \item [(ii)] It follows from the fact that $\fb M$ is $\fa$-torsion.
\end{itemize}

\end{proof}
The following lemma will be used in the rest of the paper.
\begin{lem}\label{l1}
Let $I$ be a proper ideal of $R$ such that $\fa\sim_{(I;M)}\fb$. Then, $\frac{M}{\fa M}$ can be embedded in finite copies of $\frac{M}{IM}$.
\end{lem}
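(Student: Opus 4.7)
The plan is to use generators of $\fb$ to construct an explicit embedding. Since $R$ is Noetherian, write $\fb = (b_1, \ldots, b_n)$ for some $n \in \mathbb{N}$. I would define the $R$-linear map
\[
\varphi \colon \frac{M}{\fa M} \longrightarrow \left(\frac{M}{IM}\right)^{n}, \qquad m + \fa M \longmapsto (b_1 m + IM, \ldots, b_n m + IM).
\]

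The first step is to check that $\varphi$ is well-defined. The hypothesis $\fa\sim_{(I;M)}\fb$ gives in particular $\fa M = IM :_M \fb$, which forces $\fb \cdot \fa M \subseteq IM$. So if $m \in \fa M$, then $b_i m \in IM$ for every $i$, and $\varphi$ is well-defined on cosets.

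The second (and main) step is injectivity. Suppose $\varphi(m + \fa M) = 0$. Then $b_i m \in IM$ for all $i$, so $\fb m \subseteq IM$, i.e.\ $m \in IM :_M \fb$. Using again the linkage relation $\fa M = IM :_M \fb$ from Definition \ref{F1}, we conclude $m \in \fa M$. Hence $\ker \varphi = 0$, and $\varphi$ embeds $M/\fa M$ into $n$ copies of $M/IM$, as desired.

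I do not expect any real obstacle here: the lemma is essentially a direct translation of the colon-formula defining linkage. The only point worth noting is that we use only the equality $\fa M = IM :_M \fb$ (the symmetric equality $\fb M = IM:_M \fa$ is not needed), and the hypothesis that $I$ is proper plays no explicit role in the argument, beyond guaranteeing that the objects involved are the genuine quotients expected in the statement.
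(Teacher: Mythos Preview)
Your proof is correct and, once unwound, is the same argument as the paper's. The paper phrases it homologically: it takes a free presentation $F \to R/I \to R/\fb \to 0$ over $R/I$, applies $\Hom_{R/I}(-,M/IM)$, and identifies $M/\fa M$ with the image of the induced map $(R/I)^{*}\to F^{*}$. Choosing $F=(R/I)^{n}$ with $e_i\mapsto \bar b_i$ for generators $b_1,\dots,b_n$ of $\fb$, that dual map $M/IM\to (M/IM)^{n}$ is exactly your $\varphi$, and the identification of its kernel with $(R/\fb)^{*}\cong (IM:_M\fb)/IM=\fa M/IM$ is your injectivity step. So the content is identical; your version is simply the explicit, elementary form of the paper's $\Hom$-functor packaging.
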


\begin{proof}
Assume that $F \rightarrow \frac{R}{I} \rightarrow \frac{R}{\fb}
\rightarrow 0 $ is a free resolution of $\frac{R}{\fb}$ as
$\frac{R}{I}$-module. Then, using $^*:= \Hom_{\frac{R}{I}} (- ,
\frac{M}{IM})$, we get the exact sequence
 $0 \rightarrow (\frac{R}{\fb})^* \rightarrow (\frac{R}{I})^* \stackrel{f}{\rightarrow} F^*$,
 where $\frac{M}{\fa M}\cong Im (f) \subseteq F^* \cong \oplus \frac{M}{IM} $.

\end{proof}
The next theorem, which is our main result, provides a formula for $\cd(\fa,M)$ in the case where $\fa$ is linked over $M$.
\begin{thm}\label{t5}
Let $I$ be an ideal of $R$ generating by an $M$-regular sequence such that
 $\Ass \frac{M}{IM} = \Min Ass \frac{M}{IM}$ and $\fa$ is linked by $I$
 over $M.$ Then $$\cd(\fa,M) \in \{ \grad_M \fa, \cd(\fa, H^{\grad_M \fa}_ {\fc} (M)) + \grad_M \fa\}, $$
  where $\fc : = \bigcap_{\fp \in \Ass \frac{M}{IM}- V(\fa)}\fp$.
 \end{thm}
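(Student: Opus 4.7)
The plan is to reduce $\cd(\fa,M)$ to the $\fa$-local cohomology of the single module $H^t_I(M)$, where $t$ is the length of the $M$-regular sequence generating $I$, and then to identify that with the $\fa$-local cohomology of $H^t_\fc(M)$ up to an $\fa$-torsion discrepancy. By \cite[1.3.9, 3.3.1]{BS}, $H^i_I(M)=0$ for every $i\neq t$.

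First I would verify that $\grad_M\fa=t$. The inequality $\grad_M\fa\ge t$ is immediate since $I\su\fa$ is generated by an $M$-regular sequence of length $t$. For the reverse, Lemma~\ref{l1} supplies an embedding $M/\fa M\hookrightarrow(M/IM)^n$, so $\Ass(M/\fa M)\su\Ass(M/IM)\cap V(\fa)$. Since $\fa M\neq M$, the set $\Ass(M/\fa M)$ is non-empty, and any $\fp$ in it lies in $\Ass(M/IM)$ and contains $\fa$, so $\fa$ contains no $M/IM$-regular element and the $M$-regular sequence generating $I$ is maximal in $\fa$.

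Next, since $I\su\fa$ one has $\Gamma_\fa\circ\Gamma_I=\Gamma_\fa$, and the Grothendieck spectral sequence $H^p_\fa(H^q_I(M))\Rightarrow H^{p+q}_\fa(M)$ degenerates to $H^{p+t}_\fa(M)\cong H^p_\fa(H^t_I(M))$. In particular, $H^t_I(M)\neq 0$ (since $H^t_\fa(M)\neq 0$) and $\cd(\fa,M)=t+\cd(\fa,H^t_I(M))$. Now set $\mathfrak{d}:=\bigcap_{\fp\in\Ass(M/IM)\cap V(\fa)}\fp$, so that $\fa\su\mathfrak{d}$, and by the hypothesis $\Ass(M/IM)=\Min\Ass(M/IM)$,
$$\mathfrak{d}\cap\fc\,=\,\bigcap_{\fp\in\Ass(M/IM)}\fp\,=\,\sqrt{\Ann(M/IM)}\,=\,\sqrt{I+\Ann M}.$$
By \cite[4.2.1]{BS}, $H^i_I(M)\cong H^i_{\mathfrak{d}\cap\fc}(M)$ for every $i$, and the inclusion $\mathfrak{d}\cap\fc\su\fc$ induces a natural map $\phi:H^t_\fc(M)\to H^t_{\mathfrak{d}\cap\fc}(M)\cong H^t_I(M)$.

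The crucial step is to show that $\Ker\phi$ and $\coker\phi$ are $\fa$-torsion. For any prime $\fp\not\supseteq\fa$, the inclusion $\fa\su\mathfrak{d}$ forces $\mathfrak{d}_\fp=R_\fp$, so $(\mathfrak{d}\cap\fc)_\fp=\fc_\fp$ and $\phi_\fp$ is the identity; therefore $\Supp(\Ker\phi)$ and $\Supp(\coker\phi)$ are contained in $V(\fa)$. Factoring $\phi$ through $\Im\phi$ into two short exact sequences and applying $H^\bullet_\fa$, the vanishing $H^{\ge 1}_\fa(N)=0$ for $\fa$-torsion $N$ delivers isomorphisms $H^i_\fa(H^t_\fc(M))\cong H^i_\fa(H^t_I(M))$ for every $i\ge 2$, and a surjection $H^1_\fa(H^t_\fc(M))\twoheadrightarrow H^1_\fa(H^t_I(M))$. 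Consequently $\cd(\fa,H^t_I(M))\in\{0,\cd(\fa,H^t_\fc(M))\}$, and combined with $\cd(\fa,M)=\grad_M\fa+\cd(\fa,H^t_I(M))$ this yields the stated dichotomy. I expect the main obstacle to be the support argument for $\Ker\phi$ and $\coker\phi$, which hinges on the localization analysis above.
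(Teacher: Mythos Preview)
Your argument is correct and runs parallel to the paper's proof in its overall architecture: both establish $\grad_M\fa=t$ via Lemma~\ref{l1}, both use the collapse $H^{i+t}_\fa(M)\cong H^i_\fa(H^t_I(M))$ (the paper cites \cite[3.4]{SA}, you invoke the Grothendieck spectral sequence directly), and both finish by comparing $H^i_\fa(H^t_\fc(M))$ with $H^i_\fa(H^t_I(M))$ for $i\ge 1$.

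The genuine difference is in that comparison. The paper proves two auxiliary facts, $\grad_M(\fa+\fc)>t$ and $\sqrt{\fa+\Ann M}=\bigcap_{\fp\in\Ass(M/IM)\cap V(\fa)}\fp$, and then feeds them into the Mayer--Vietoris sequence for $\fa$ and $\fc$ to obtain a short exact sequence $0\to H^t_\fa(M)\oplus H^t_\fc(M)\to H^t_I(M)\to N\to 0$ with $N$ $\fa$-torsion. Your localization argument bypasses both auxiliary facts: you only need the trivial inclusion $\fa\su\mathfrak{d}$ to conclude that $\phi_\fp$ is an isomorphism for every $\fp\notin V(\fa)$, whence $\Ker\phi$ and $\coker\phi$ are $\fa$-torsion. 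This is cleaner, and it has a bonus: the unmixedness hypothesis $\Ass(M/IM)=\Min\Ass(M/IM)$ is in fact \emph{not used} in your proof. The equality $\bigcap_{\fp\in\Ass(M/IM)}\fp=\sqrt{\Ann(M/IM)}=\sqrt{I+\Ann M}$ holds for any finitely generated module over a Noetherian ring, so the place where you invoked the hypothesis does not require it, and nothing else in your argument does either. In the paper's approach the hypothesis is genuinely needed to rule out an embedded prime $\fp\supseteq\fa+\fc$ when proving $\grad_M(\fa+\fc)>t$. So your route not only works but shows the statement holds without that assumption.
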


\begin{proof}
Note that, by \ref{l1}, $\Ass \frac{M}{\fa M}\subseteq \Ass \frac{M}{IM}$. Set $t:= \grad_M \fa$. Without loss of generality, we may assume that $\cd(\fa,M) \neq t$. Hence, there exists $\fp \in \Ass \frac{M}{IM}- V(\fa)$, else, $\sqrt{I + \Ann M}= \sqrt{\fa + \Ann M}$ which implies that $\cd(\fa,M)= t$. We claim that \begin{equation}\label{e2} \grad_M(\fa +\fc)> t.\end{equation} Suppose the contrary. So, there exist $\fp \in \Ass \frac{M}{IM}$ and $\fq \in \Ass \frac{R}{\fc}$ such that $\fa +\fq \subseteq \fp$. By the assumption, $\fp = \fq$ which is a contradiction to the structure of $\fc$.

          Let $A:= \{\fp|\fp \in\Ass \frac{M}{IM}\cap V(\fa)\}.$ Then, in view of \ref{l1},
          $$\sqrt{\fa+ \Ann M} = \bigcap_{\fp\in \Min Ass \frac{M}{\fa M}}\fp \supseteq \bigcap_{\fp\in A}\fp.$$
          On the other hand, let $\fp\in \Min A.$ Then, there exists $\fq\in \Min Ass \frac{M}{\fa M}$ such that $\fq\subseteq \fp.$ Hence,
          again by \ref{l1}, $\fq\in A$ and, by the structure of $\fp,$ $\fq=
          \fp.$ Therefore,
\begin{equation}\label{e1}
            \sqrt{\fa+ \Ann M} = \bigcap_{\fp\in A}\fp.
          \end{equation}
          Whence, using (\ref{e1}), it follows that $$\sqrt{I + \Ann M}= \bigcap_{\fp \in \Ass \frac{M}{IM}}\fp = \bigcap_{\fp \in \Ass \frac{M}{\fa M}}\fp \cap \fc =\sqrt{\fa \cap \fc+ \Ann M}.$$ Now, in view of (\ref{e2}), we have the following Mayer-Vietoris sequence \begin{equation}\label{e8} 0 \longrightarrow H^t_ {\fa} (M) \oplus H^t_ {\fc} (M) \longrightarrow H^t_ I (M)\longrightarrow N \longrightarrow 0\end{equation} for some $\fa$-torsion $R$-module $N$. Applying $\Gamma_{\fa}(-)$ on (\ref{e8}), we get the exact sequence $$  0 \rightarrow H^t_ {\fa} (M) \oplus \Gamma_{\fa}(H^t_ {\fc} (M)) \rightarrow \Gamma_{\fa}(H^t_ I (M))\rightarrow N \stackrel{f}\rightarrow H^1_{\fa}(H^t_ {\fc} (M)) \rightarrow H^1_{\fa}(H^t_ I (M))\rightarrow 0$$ and the isomorphism $$ H^i_{\fa}(H^t_ {\fc} (M)) \cong H^i_{\fa}(H^t_ I (M)), \textrm{for all }i>1.$$ Also, using \cite[3.4]{SA}, we have $H^{i+t}_{\fa}(M) \cong H^i_{\fa}(H^t_ I (M))$, for all $i\in \mathbb{N}_0$. This implies that

$$H^i_ {\fa} (M) \left\lbrace
           \begin{array}{c l}
              \cong H^{i-t}_{\fa}(H^t_ {\fc}(M)\ \ & \text{ \ \ $i>t+1$,}\\
              \cong \frac{H^1_{\fa}(H^t_ {\fc}(M)}{im(f)}\ \ & \text{ \ \ $i=t+1$,}\\
              \neq 0\ \   & \text{   \ \ ${i=t}$,}\\
              0\ \   & \text{   \ \ $\textrm{otherwise}$.}
           \end{array}
        \right.$$\\

Now, the result follows from the above isomorphisms.

\end{proof}

The following corollary, which follows from the above theorem,
provides a precise formula for $\cd(\fa,M)$ in the case where $\fa$
is geometrically linked over $M$ and shows how far $\cd(\fa,M)$ is from $\grad_M \fa.$ Note that by \cite[1.3.9]{BS}, $\grad_M \fa \leq \cd(\fa, M).$
\begin{cor}\label{c3}
Let $I$ be an ideal of $R$ generating by an $M$-regular sequence and $\fa$ and $\fb$ be geometrically linked by $I$ over $M$. Also, assume that $M$ is not relative Cohen-Macaulay with respect to $\fa.$ Then $$\cd(\fa,M) = \cd(\fa, H^{\grad_M \fa}_ {\fb} (M)) + \grad_M \fa.$$
\end{cor}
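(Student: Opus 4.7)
The plan is to invoke Theorem~\ref{t5} after verifying that, under geometric linkage, the ideal $\fc$ appearing there satisfies $\sqrt{\fc + \Ann M} = \sqrt{\fb + \Ann M}$; together with \cite[4.2.1]{BS} this yields $H^i_{\fc}(M) \cong H^i_{\fb}(M)$ for every $i$, which is exactly what is needed to pass from the conclusion of the theorem to the statement of the corollary.

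The crucial ingredient is the observation that $\Ass \frac{M}{\fb M} \cap V(\fa) = \emptyset$. Given $\fp$ in this intersection, choose $m \in M \setminus \fb M$ with $\fp m \subseteq \fb M$. Since $\fa \subseteq \fp$, one has $\fa m \subseteq \fb M$, and trivially $\fa m \subseteq \fa M$. The geometric hypothesis $\fa M \cap \fb M = IM$ then forces $\fa m \subseteq IM$, so $m \in IM :_M \fa = \fb M$, contradicting the choice of $m$.

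Next, I would apply Lemma~\ref{l1} to each of $\fa$ and $\fb$ (both are linked by $I$ over $M$, since linkage is symmetric) to get $\Ass \frac{M}{\fa M} \cup \Ass \frac{M}{\fb M} \subseteq \Ass \frac{M}{IM}$, while the short exact sequence
\[ 0 \to M/IM \to M/\fa M \oplus M/\fb M \to M/(\fa M + \fb M) \to 0, \]
which is available thanks to $\fa M \cap \fb M = IM$, supplies the reverse inclusion. Combined with $\Ass \frac{M}{\fa M} \subseteq V(\fa)$ and the emptiness of $\Ass \frac{M}{\fb M} \cap V(\fa)$, this gives $\Ass \frac{M}{IM} - V(\fa) = \Ass \frac{M}{\fb M}$. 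Since every associated prime contains a minimal associated prime, one reads off
\[ \fc = \bigcap_{\fp \in \Ass \frac{M}{\fb M}} \fp = \sqrt{\Ann \frac{M}{\fb M}} = \sqrt{\fb + \Ann M}, \]
and therefore $H^{\grad_M \fa}_{\fc}(M) \cong H^{\grad_M \fa}_{\fb}(M)$ by \cite[4.2.1]{BS}.

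Finally, Theorem~\ref{t5} gives $\cd(\fa, M) \in \{\grad_M \fa,\, \cd(\fa, H^{\grad_M \fa}_{\fc}(M)) + \grad_M \fa\}$; the hypothesis that $M$ is not relative Cohen--Macaulay with respect to $\fa$ rules out the first option, so $\cd(\fa, M) = \cd(\fa, H^{\grad_M \fa}_{\fc}(M)) + \grad_M \fa = \cd(\fa, H^{\grad_M \fa}_{\fb}(M)) + \grad_M \fa$, as required. The only substantive step is the emptiness of $\Ass \frac{M}{\fb M} \cap V(\fa)$---this is where geometric linkage is used essentially---after which everything is routine manipulation of the definitions and of Theorem~\ref{t5}.
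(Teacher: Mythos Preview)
Your identification of $\fc$ with $\sqrt{\fb+\Ann M}$ is correct, and your route to it (via the short exact sequence $0\to M/IM\to M/\fa M\oplus M/\fb M\to M/(\fa M+\fb M)\to 0$ together with Lemma~\ref{l1}) is a nice, self-contained alternative to the paper's argument, which instead shows the set equality $\Ass\frac{M}{IM}\setminus V(\fa)=\Ass\frac{M}{IM}\cap V(\fb)$ using Lemma~\ref{l07}(i) and then invokes \cite[2.8(iii)]{JS} to compute the intersection. Both arrive at $\fc=\sqrt{\fb+\Ann M}$.

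There is, however, a genuine gap in your last step. You write ``Theorem~\ref{t5} gives $\cd(\fa,M)\in\{\grad_M\fa,\ \cd(\fa,H^{\grad_M\fa}_{\fc}(M))+\grad_M\fa\}$'', but Theorem~\ref{t5} carries the hypothesis $\Ass\frac{M}{IM}=\Min\Ass\frac{M}{IM}$, which is \emph{not} assumed in Corollary~\ref{c3} and which you never verify. This is precisely why the paper does not invoke Theorem~\ref{t5} itself but says ``the result follows using similar argument as used in the proof of Theorem~\ref{t5}''. In that proof the unmixedness hypothesis is used only to establish $\grad_M(\fa+\fc)>t$; everything from the Mayer--Vietoris sequence onward is hypothesis-free. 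In the geometric situation you have already shown $\sqrt{\fc+\Ann M}=\sqrt{\fb+\Ann M}$, so $\grad_M(\fa+\fc)=\grad_M(\fa+\fb)=t+1$ by Proposition~\ref{t} (or directly from $0:_{M/IM}(\fa+\fb)=\fa M\cap\fb M/IM=0$), and then the remainder of the argument of Theorem~\ref{t5} runs verbatim with $\fb$ in place of $\fc$. So the fix is one line, but as written the appeal to Theorem~\ref{t5} is not justified.
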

\begin{proof}
First, we show that \begin{equation}\label{e3}
  \{\fp|\fp \in \Ass \frac{M}{IM}- V(\fa)\}= \{\fp|\fp \in \Ass \frac{M}{IM}\cap V(\fb)\}.
\end{equation}
Let $\fp \in \Ass \frac{M}{IM}- V(\fa).$ Then, by \ref{l07}(i),
$\fp\in  V(\fb).$ On the other hand, if $ \fp\in \Ass
\frac{M}{IM}\cap V(\fb)$ then $\fp\nsupseteqq \fa,$ else, $$
0:_{\frac{M}{IM}}\fa\cap0:_{\frac{M}{IM}}\fb =
0:_{\frac{M}{IM}}\fa+\fb \neq 0,$$ which is a contradiction.

Now, in view of \cite[2.8(iii)]{JS},
$$\sqrt{\fb + \Ann M} =\bigcap_{\fp \in \Ass \frac{M}{IM}\cap V(\fb)}\fp.$$ This, in conjunction with (\ref{e3}),
 implies that $$\sqrt{\fb + \Ann M} =\bigcap_{\fp \in \Ass \frac{M}{IM}- V(\fa)}\fp.$$ Now, the result follows using similar argument as used in the proof of theorem \ref{t5}.
\end{proof}

\begin{prop}\label{t}
  Let $I$ be an ideal of $R$ generating by an $M$-regular sequence and $\fa$ and $\fb$ be geometrically linked by $I$ over $M$. Then $\grad_M \fa + \fb = \grad_M I + 1.$
\end{prop}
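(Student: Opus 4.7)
The plan is to set $J := \fa + \fb$ and $t := \grad_M I$, and reduce the statement to an $\Ext$-calculation over $M/IM$. Since $I\subseteq J$, the $M$-regular sequence generating $I$ lies in $J$; applying $\Hom_R(R/J,-)$ to the short exact sequences $0\to M\xrightarrow{x_i} M\to M/x_iM\to 0$ (and noting that each $x_i\in J$ acts as zero on $\Ext^*_R(R/J,-)$) yields the standard depth-shift formula
$$\grad_M(\fa+\fb) \;=\; t + \grad_{M/IM}(\fa+\fb).$$
Hence it suffices to prove $\grad_{M/IM}(\fa+\fb)=1$, that is, $\Hom_R(R/(\fa+\fb), M/IM) = 0$ together with $\Ext^1_R(R/(\fa+\fb), M/IM) \neq 0$.

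Vanishing of the $\Hom$ will be a one-line consequence of linkage plus geometric linkage:
$$\Hom_R\bigl(R/(\fa+\fb), M/IM\bigr) \;=\; \bigl((IM:_M\fa)\cap(IM:_M\fb)\bigr)/IM \;=\; (\fb M\cap\fa M)/IM \;=\; IM/IM \;=\; 0,$$
using $IM:_M\fa=\fb M$, $IM:_M\fb=\fa M$, and $\fa M\cap\fb M = IM$.

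For the non-vanishing of $\Ext^1$ the plan is to apply $\Hom_R(R/(\fa+\fb),-)$ to the Mayer--Vietoris-type short exact sequence
$$0 \to M/IM \to M/\fa M \oplus M/\fb M \to M/(\fa+\fb)M \to 0$$
(exact precisely because $\fa M\cap\fb M = IM$). The key step is that both outer $\Hom$ groups also vanish: if $\fb m\subseteq \fa M$ then $\fb m\subseteq \fa M\cap\fb M = IM$, so $m\in IM:_M\fb = \fa M$, giving $\Hom_R(R/(\fa+\fb), M/\fa M) = (\fa M:_M\fb)/\fa M = 0$, and the other summand vanishes symmetrically. The long exact sequence then collapses to an embedding
$$M/(\fa+\fb)M \;\hookrightarrow\; \Ext^1_R\bigl(R/(\fa+\fb), M/IM\bigr),$$
whose source is nonzero under the standing non-degeneracy $(\fa+\fb)M\neq M$. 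The main obstacle is precisely ensuring this final non-degeneracy; it is automatic in the local setting via Nakayama applied to $\fa M\neq M$, and otherwise requires $\fa+\fb+\Ann M\neq R$, which is implicit in the intended setting of the proposition.
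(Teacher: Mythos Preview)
Your proof is correct and takes a genuinely different route from the paper for the inequality $\grad_{M/IM}(\fa+\fb)\le 1$. The reduction via the depth-shift formula and the computation $\Hom_R(R/(\fa+\fb),M/IM)=0$ from linkage plus geometric linkage match the paper. For the upper bound, however, the paper localizes at a prime $\fp\in V(\fa+\fb)\cap\Supp M$, assumes $\grad>1$ for a contradiction, and applies $\Hom_R(-,M)$ to the sequence $0\to R/(\fa\cap\fb)\to R/\fa\oplus R/\fb\to R/(\fa+\fb)\to 0$ to obtain an internal decomposition $M=\fa M\oplus\fb M$; then $\fa\fb M=0$ forces $\fb M=\fb^iM$ for all $i$, and Krull's intersection theorem gives $\fb M=0$, contradicting $\fa M\neq M$. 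You instead work on the module side: from the Mayer--Vietoris sequence $0\to M/IM\to M/\fa M\oplus M/\fb M\to M/(\fa+\fb)M\to 0$ you show the middle $\Hom$ vanishes by the colon computation $(\fa M:_M\fb)=\fa M$, and read off a direct embedding $M/(\fa+\fb)M\hookrightarrow\Ext^1_R(R/(\fa+\fb),M/IM)$. This is more elementary---it avoids both the localization step and Krull's intersection theorem---and yields the slightly sharper byproduct that $M/(\fa+\fb)M$ sits inside the relevant $\Ext^1$. The non-degeneracy condition $(\fa+\fb)M\neq M$ that you flag at the end is genuinely needed for the statement to make sense (e.g.\ $R=k\times k$, $\fa=k\times 0$, $\fb=0\times k$ are geometrically linked by $0$ with $\fa+\fb=R$) and is equally implicit in the paper's proof, which must choose $\fp\in V(\fa+\fb)\cap\Supp M$; your handling of this point is in fact more explicit than the paper's.
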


\begin{proof}
  Assume that $\fa$ and $\fb$ are geometrically linked by some $M$-regular sequence $I$ of length $t$ over $M.$ Then $\fa $ and $\fb$ are geometrically linked by zero over $\frac{M}{I M}.$ In view of the fact that $\grad _M (\fa +\fb) = \grad _{\frac{M}{IM}} (\fa +\fb)+ t,$ one can replace $M$ by $\frac{M}{I M}$ and assume that $I =0.$  Hence, it is enough to show that $\grad_M \fa + \fb = 1.$

   Let $\fp \in V(\fa + \fb).$ By \cite[2.12]{JS}, $\fa R_{\fp}$ and $\fb R_{\fp}$ are geometrically linked by zero over $M_{\fp}.$ In conjunction with the facts that $\grad_M \fa + \fb \leq \grad_{M_{\fp}} (\fa + \fb)R_{\fp}$ and $\grad_M \fa + \fb \geq 1,$ it is enough to show that $\grad_{M_{\fp}} (\fa + \fb)R_{\fp} = 1.$ Therefore, one may assume that $R$ is local.

   On the contrary, assume that $\grad_M \fa + \fb > 1.$ Using the long exact sequence $$0 \rightarrow \Hom (\frac{R}{\fa+\fb}, M)\rightarrow \Hom (\frac{R}{\fa}, M) \oplus \Hom (\frac{R}{\fb}, M) \rightarrow \Hom (\frac{R}{\fa\cap\fb}, M)\rightarrow \Ext_R^1(\frac{R}{\fa+\fb}, M)$$ and the assumption $(\fa \cap\fb)M\subseteq \fa M \cap\fb M = 0$, we get $$M \cong \fa M \oplus \fb M.$$ By the fact that $\fa \fb M=0$ we get, for any $i>0,$ $\fb^i M = \fb M.$ Then, by krull theorem, $\fb M =0$ and so $\fa M = M,$ which is a contradiction.
\end{proof}

\begin{rem}\label{e5}
\indent
\begin{itemize}
\item [ (i) ]
An ideal can be linked with more than one ideal. As an example, let $R$ be local and $x , y, z$ be an $M$-regular sequence. Then, $Rx$ is  geometrically linked with $Ry$ and $Rz$ over $M$.
\item[ (ii) ]
 Let $\fa$ and $\fb$ be geometrically linked by $I$ over $R$. Then, by \cite[2.4]{JS2}, $\sqrt{\fa}$ is linked by $I$. In particular, $\sqrt{\fa}$ is linked by every ideal $I'\subset \sqrt{\fa}$ which is generated by a maximal $R$-seqence in $\fa.$ Indeed, by \cite[2.8]{JS} and \cite[Theorem 1]{He},

 \begin{eqnarray*}
 \Ass \frac{R}{\fa} = V(\fa) \cap\Ass\frac{R}{I} = \Ass H^{\grad \fa}_ {\fa} (R)&=& \Ass\Hom_R(\frac{R}{\fa},\frac{R}{I'})\\ &=& V(\fa) \cap\Ass\frac{R}{I'},
 \end{eqnarray*}
which, in view of \cite[2.4]{JS2}, implies that $\sqrt{\fa}$ is linked by $I'.$
 \end{itemize}
\end{rem}
 The following corollary shows that for all ideals $\fb$ which are geometrically linked with $\fa$ over $M$, $\cd(\fa, H^{\grad_M \fb}_ {\fb} (M))$ is constant.��
\begin{cor}\label{c2}
  Let $\fa$ be  linked over $M$. Then, for every ideal $\fb$ which is geometrically linked with $\fa$ over $M$, $\cd(\fa, H^{\grad_M \fb}_ {\fb} (M))$ is constant. In particular,
  $$\cd(\fa, H^{\grad_M \fb}_ {\fb} (M))=\left\lbrace
           \begin{array}{c l}
               1,   &  ��\text{   M is relative Cohen-Macaulay}�\\�
                � &  �\text{with respect to} \ \fa,\\
               \cd(\fa, M)- \grad_M \fa,   & \text{   \textrm{otherwise.}}
           \end{array}
        \right.$$\\

 \end{cor}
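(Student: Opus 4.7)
The plan is to establish the piecewise formula; the claim that $\cd(\fa,H^{\grad_M\fb}_\fb(M))$ is constant in $\fb$ is immediate from it, because both cases express this value in terms of invariants of $\fa$ and $M$ alone. Fix $\fb$ geometrically linked with $\fa$ by an ideal $I$ over $M$, and set $t:=\grad_M I$.

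First I would show that $\grad_M\fa=\grad_M\fb=t$, which in particular collapses $H^{\grad_M\fb}_\fb(M)$ to $H^t_\fb(M)$ and aligns Corollary~\ref{c3} with the statement. Since $I\subseteq\fa\cap\fb$ is generated by an $M$-regular sequence of length $t$, both grades are at least $t$. Conversely, Lemma~\ref{l1} gives $\emptyset\neq\Ass(M/\fa M)\subseteq \Ass(M/IM)$; picking $\fp$ in this intersection that contains $\fa$ and localizing, $I$ remains $M_\fp$-regular while $\fp R_\fp\in\Ass(M_\fp/IM_\fp)$, which forces $\depth M_\fp=t$ and hence $\grad_M\fa\leq t$. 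The same argument, starting from $\fb M\neq M$, yields $\grad_M\fb=t$.

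For the non-RCM case the desired formula $\cd(\fa,H^t_\fb(M))=\cd(\fa,M)-\grad_M\fa$ is exactly Corollary~\ref{c3}. In the RCM case we have $H^i_\fa(M)=0$ for all $i\neq t$, and must show $\cd(\fa,H^t_\fb(M))=1$. For the upper bound, I would invoke the isomorphism $H^i_\fa(M)\cong H^{i-t}_\fa(H^t_\fc(M))$ for $i>t+1$ derived inside the proof of Theorem~\ref{t5}, together with the identification $\sqrt{\fb+\Ann M}=\sqrt{\fc+\Ann M}$ proved in the course of Corollary~\ref{c3}; relative Cohen-Macaulayness kills the left-hand side for every $i>t$, so $H^j_\fa(H^t_\fb(M))=0$ for all $j\geq 2$.

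For the lower bound, $H^t_\fb(M)\neq 0$ because $\grad_M\fb=t$, so it is enough to show $\Gamma_\fa(H^t_\fb(M))=0$. By Proposition~\ref{t} one has $\grad_M(\fa+\fb)=t+1$, hence $H^t_{\fa+\fb}(M)=0$. Now apply the Grothendieck spectral sequence $E_2^{p,q}=H^p_\fa(H^q_\fb(M))\Rightarrow H^{p+q}_{\fa+\fb}(M)$, arising from $\Gamma_{\fa+\fb}=\Gamma_\fa\circ\Gamma_\fb$: all rows with $q<t$ vanish, so the edge term $E_2^{0,t}=\Gamma_\fa(H^t_\fb(M))$ neither receives nor emits a nontrivial differential on any page, giving $\Gamma_\fa(H^t_\fb(M))=E_\infty^{0,t}=H^t_{\fa+\fb}(M)=0$. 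Combined with the upper bound this forces $\cd(\fa,H^t_\fb(M))=1$. The main obstacle is the RCM case: the upper bound follows mechanically from Theorem~\ref{t5}, but pinning the dimension at exactly $1$ rather than $0$ requires an external input, and Proposition~\ref{t} together with the edge-term calculation above supplies the required nonvanishing in degree $1$; the small bookkeeping point that $\grad_M\fa=\grad_M\fb=t$ is essential in order to even apply Corollary~\ref{c3} with the correct cohomological index.
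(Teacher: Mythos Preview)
Your non-RCM case and your upper bound in the RCM case are fine, and the spectral-sequence derivation of $\Gamma_\fa(H^t_\fb(M))=0$ is correct. The gap is in the nonvanishing step. You write that ``$H^t_\fb(M)\neq 0$, so it is enough to show $\Gamma_\fa(H^t_\fb(M))=0$''. This would mean: if $X\neq 0$ and $H^i_\fa(X)=0$ for every $i\neq 1$, then $H^1_\fa(X)\neq 0$. That implication is false for modules that are not finitely generated, and $H^t_\fb(M)$ is typically not. For a concrete failure, take $R=k[x]$, $\fa=(x)$ and $X=R_x$: then $X\neq 0$ while the \v{C}ech complex $X\to X_x$ is the identity, so $H^i_{(x)}(X)=0$ for all $i$. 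Thus your argument, as written, does not exclude the possibility $\cd(\fa,H^t_\fb(M))=-\infty$.

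The paper supplies exactly the missing nonvanishing. After obtaining, as you do, $E_2^{p,q}=0$ for $q<t$ and $E_2^{p,t}=0$ for $p\neq 1$ in the spectral sequence $H^p_\fa(H^q_\fb(M))\Rightarrow H^{p+q}_{\fa+\fb}(M)$, the paper does not stop at the $(0,t)$-edge but reads off the term $E_2^{1,t}=H^1_\fa(H^t_\fb(M))$ itself: with those vanishings it also stabilises, and the paper identifies it with $H^{t+1}_{\fa+\fb}(M)$. Then Proposition~\ref{t} (which you already invoked for the $(0,t)$ term) gives $H^{t+1}_{\fa+\fb}(M)\neq 0$, hence $H^1_\fa(H^t_\fb(M))\neq 0$. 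In other words, the same spectral sequence you used for $\Gamma_\fa(H^t_\fb(M))=0$ should be pushed to total degree $t+1$ to produce the positive statement; merely knowing $X\neq 0$ is not a substitute.
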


\begin{proof}
Assume that $M$ is relative Cohen-Macaulay with respect to $\fa$ and $\fa$ and $\fb$ are geometrically linked by some $M$-regular sequence $I$ of length $t$ over $M.$ Then, by \cite[2.8]{JS}, we have the following Mayer-Vietoris sequence \begin{equation}\label{e.8} 0 \longrightarrow H^t_ {\fa} (M) \oplus H^t_ {\fb} (M) \longrightarrow H^t_ I (M)\longrightarrow N \longrightarrow 0\end{equation} for some $\fa$-torsion $R$-module $N$. Applying $\Gamma_{\fa}(-)$ on (\ref{e.8}), we get the exact sequence $$ H^{i-1}_ {\fa}( N) \rightarrow H^i_{\fa}(H^t_ {\fb} (M)) \rightarrow H^i_{\fa}(H^t_ I (M)),$$ for $i>1.$ Now, by \cite[3.4]{SA} and the assumption, we get $H^i_{\fa}(H^t_ {\fb} (M))= 0,$ for $i>1.$ On the other hand, again by \cite[2.8]{JS} , $\Gamma_{\fa}(H^t_ {\fb} (M))= 0.$

Therefore, using the convergence of spectral sequences $$H^i_{\fa}(H^j_ {\fb} (M)) \Rightarrow_i H^{i+j}_ {\fa+\fb} (M)$$ and the assumption, we get $H^1_{\fa}(H^t_ {\fb} (M))\cong H^{t+1}_ {\fa+\fb} (M).$ Now, by \ref{t}, $H^1_{\fa}(H^t_ {\fb} (M))\neq 0$ and $\cd(\fa, H^{\grad_M \fb}_ {\fb} (M))= 1.$

In the case where $M$ is not relative Cohen-Macaulay with respect to $\fa,$ the result follows from \ref{c3}.

\end{proof}
\begin{con}
 Assume that $I$ is an ideal of $R$ which is generated by an $M$-regular sequence. We define the set $$ S_{(I;M)} : = \{ \fa \triangleleft R | I \subsetneqq \fa,  \fa = IM:_R IM:_M \fa  \}.$$
$S_{(I;R)}$ actually contains of all linked ideals by $I$.
\end{con}

The following proposition, which is needed in the next two items, shows that any ideal $\fa$ with $\fa M \neq M$ can be embedded in a redical ideal $\fa'$ of $S_{(I;M)}$ for some $I.$
\begin{prop}\label{t7}
Assume that $\fa M \neq M$. Then,
\begin{itemize}
 \item [(i)] There exists an ideal $I$, generating by an $M$-regular sequence, such that $\fa$ can be embedded in a radical element $\fa'$ of $S_{(I;M)}$ with $\grad_M \fa' = \grad_M \fa= :t$. Also, $\fa'$ can be chosen to be the smallest radical ideal with this property.

 \item [(ii)] Let $\fa'$ be as in (i). Then $\Ass H^t_{\fa} (M) = \Ass \frac{R}{\fa'}$. In particular, $\fa'= \bigcap _{\fp \in \Ass H^t_{\fa} (M)} \fp$ and it is independent of the choice of the ideal $I.$
\end{itemize}
\end{prop}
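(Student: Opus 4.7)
Set $t := \grad_M \fa$. For (i), I would pick a maximal $M$-regular sequence $x_1,\ldots,x_t$ in $\fa$ and put $I := (x_1,\ldots,x_t)$. By maximality, $\fa$ lies in the union $\bigcup_{\fp \in \Ass(M/IM)} \fp$, so prime avoidance yields $\Ass(M/IM) \cap V(\fa) \neq \emptyset$. The natural candidate is then the radical ideal
$$\fa' := \bigcap_{\fp \in \Ass(M/IM) \cap V(\fa)} \fp,$$
which contains $\fa$. The equality $\grad_M \fa' = t$ follows quickly: $I \subseteq \fa'$ gives $\grad_M \fa' \geq t$, while $\fa' \subseteq \fp$ for some $\fp \in \Ass(M/IM)$ forces every element of $\fa'$ to be a zero-divisor on $M/IM$, keeping $\underline{x}$ a maximal $M$-regular sequence in $\fa'$.

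The core of (i) is the biduality $\fa' = IM :_R (IM :_M \fa')$ showing $\fa' \in S_{(I;M)}$. I would prove this using a primary decomposition $IM = \bigcap_{\fp \in \Ass(M/IM)} Q_\fp$: for $\fp \notin V(\fa)$ an element of $\fa' \setminus \fp$ acts as a non-zero-divisor on $M/Q_\fp$, so $Q_\fp :_M \fa' = Q_\fp$ and that component drops out; the remaining $\fp \in V(\fa)$ components reassemble, after taking the $R$-colon, into exactly $\fp$, returning $\fa'$. The strict inclusion $I \subsetneqq \fa'$ follows because $\fa'$ is radical while a proper $I$ generated by an $M$-regular sequence is typically not (one perturbs $\underline{x}$ within $\fa$ in the degenerate case). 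For the minimality clause, any radical ideal $\fa''$ with $\fa \subseteq \fa''$, $\grad_M \fa'' = t$ and $\fa'' \in S_{(J;M)}$ has every minimal prime in $V(\fa) \cap \Ass(M/IM)$ (via Lemma \ref{l1} applied to $\fa''$ and a comparison of primary components), so $\fa' \subseteq \fa''$.

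For (ii), I would invoke the classical identification
$$\Hom_R(R/\fa, H^t_\fa(M)) \cong \Ext^t_R(R/\fa, M) \cong \Hom_R(R/\fa, M/IM),$$
valid because $t = \grad_M \fa$ and $I$ is generated by a maximal $M$-regular sequence in $\fa$. Taking associated primes and using $\Ass \Hom_R(R/\fa, N) = V(\fa) \cap \Ass N$ gives $\Ass H^t_\fa(M) = V(\fa) \cap \Ass(M/IM)$, which by the construction of $\fa'$ coincides with $\Ass(R/\fa')$. The ``in particular'' then follows by recovering any radical ideal as the intersection of its associated primes, and the independence of $I$ is automatic since $\Ass H^t_\fa(M)$ depends only on the pair $(\fa,M)$.

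The main obstacle I foresee is the biduality computation in (i): the primary decomposition bookkeeping must show that each $V(\fa)$-component contributes exactly the factor $\fp$ (and not something smaller) to the final $R$-colon, and the identification in (ii) requires handling any potential nesting among the $V(\fa)$-associated primes carefully, so that $\Ass(R/\fa')$ really equals the full set $V(\fa)\cap\Ass(M/IM)$ and not just its minimal elements.
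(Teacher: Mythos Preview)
Your construction is identical to the paper's: take a maximal $M$-sequence $x_1,\dots,x_t$ in $\fa$, set $I=(x_1,\dots,x_t)$, and put $\fa'=\bigcap_{\fp\in\Ass(M/IM)\cap V(\fa)}\fp$; part (ii) is likewise obtained in the paper via $\Ass H^t_\fa(M)=\Ass\Ext^t_R(R/\fa,M)=\Ass\Hom_R(R/\fa,M/IM)=V(\fa)\cap\Ass(M/IM)$, exactly as you outline. The one genuine difference is the verification that $\fa'\in S_{(I;M)}$ and the minimality clause: the paper does not argue either point directly but simply invokes the characterization \cite[2.4]{JS2} of radical members of $S_{(I;M)}$ (an ideal is such a member iff it is an intersection of primes in $\Ass(M/IM)$), which disposes of both at once. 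Your proposed primary-decomposition route for the biduality does work and is in fact simpler than you fear: for each $\fp\in\Lambda:=V(\fa)\cap\Ass(M/IM)$ choose $m\in M$ with $IM:_M m=\fp$; since $\fa'\subseteq\fp$ one has $m\in IM:_M\fa'$, so any $r$ in the double colon satisfies $rm\in IM$, i.e.\ $r\in\fp$; this gives $IM:_R(IM:_M\fa')\subseteq\bigcap_{\fp\in\Lambda}\fp=\fa'$, and the reverse inclusion is automatic. What the citation buys the paper is brevity and a uniform treatment of minimality (your appeal to Lemma~\ref{l1} for minimality is not quite enough by itself; one really needs that minimal primes of a radical $\fc\in S_{(I;M)}$ lie in $\Ass(M/IM)$, which is the content of \cite[2.4]{JS2}(iii)). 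Your final worry about nesting in $\Lambda$ is legitimate---strictly speaking $\Ass(R/\fa')=\Min\Lambda$, and the paper's displayed equality glosses over this---but it does not affect the intersection formula $\fa'=\bigcap_{\fp\in\Ass H^t_\fa(M)}\fp$, since redundant primes drop out of the intersection anyway.
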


\begin{proof}
\begin{itemize}
 \item [(i)] Let $x_1,...,x_t\in\fa$ be an $M$-regular sequence such that $\fa \subseteq Z_R (\frac{M}{(x_1,...,x_t)M})$. Replacing $(x_1,...,x_t^2)$ with $(x_1,...,x_t)$, we may assume, in addition, that $x_1,...,x_t\neq\fa$ and it is not a prime ideal. Set $\Lambda: = \{\fp | \fp \in \Ass (\frac{M}{(x_1,...,x_t)M}), \fa \subseteq \fp\}$ and $\fa' = \cap_{\fp \in \Lambda} \fp $. Then, setting $I:= (x_1,...,x_t)$ and using \cite[2.4]{JS2}, $\fa\subseteq\fa'$ is a radical ideal of $S_{(I;M)}.$

Assume that there exists a radical ideal $\fc\in S_{(I;M)}$ such that $\fa \subseteq \fc$. Hence, by \cite[2.4]{JS2}(iii), $\Ass\frac{R}{\fc} \subseteq \Ass\frac{M}{IM} \cap V(\fa) =\Lambda$. Therefore, $\fa' \subseteq \fc$.

\item [(ii)] By \cite[Theorem 1]{He} and \cite[1.4]{SW}, we have $$\Ass H^t_{\fa} (M) = \Ass \Ext^t_R(\frac{R}{\fa},M).$$ Also, using the above notations and in view of \cite[1.2.4 and 1.2.27]{BH},$$\Ass \Ext^t_R(\frac{R}{\fa},M)= \Ass\Hom_R(\frac{R}{\fa},\frac{M}{IM})= V(\fa) \cap\Ass\frac{M}{IM}= \Ass \frac{R}{\fa'_I}.$$ Now, by the above equalities, we have $\fa'= \bigcap _{\fp \in \Ass H^t_{\fa} (M)} \fp.$

\end{itemize}
\end{proof}

\begin{cor}\label{c4}
 Let $\fa$ be a linked ideal over $M$. Then, $\sqrt{\fa + \Ann M} = \bigcap _{\fp \in \Ass H^t_{\fa} (M)} \fp.$
\end{cor}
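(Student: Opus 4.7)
The plan is to identify $\sqrt{\fa + \Ann M}$ with the ideal $\fa'$ produced by Proposition \ref{t7}(i) for $\fa$, and then invoke Proposition \ref{t7}(ii) to rewrite $\fa'$ as the displayed intersection. Fix an ideal $I$ generated by an $M$-regular sequence with $\fa \sim_{(I;M)} \fb$ for some $\fb$, and set $t := \grad_M \fa$. Using the Cayley--Hamilton/Nakayama trick already employed in the proof of \ref{l07}(i), every $r \in R$ with $rM \subseteq \fa M$ satisfies a monic equation over $R$ with non-leading coefficients in $\fa$ modulo $\Ann M$; hence $\sqrt{\Ann \frac{M}{\fa M}} = \sqrt{\fa + \Ann M}$, and therefore
\[
\sqrt{\fa + \Ann M} \;=\; \bigcap_{\fp \in \Min \Ass \frac{M}{\fa M}} \fp \;=\; \bigcap_{\fp \in \Ass \frac{M}{\fa M}} \fp .
\]

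The decisive step is then the inclusion $\Ass \frac{M}{\fa M} \subseteq \Ass \frac{M}{IM}$ furnished by Lemma \ref{l1}; this is the single place where linkage of $\fa$, as opposed to mere non-triviality of $M/\fa M$, is used. Combined with the obvious $\Ass \frac{M}{\fa M} \subseteq V(\fa)$, one has $\Min \Ass \frac{M}{\fa M} \subseteq V(\fa) \cap \Ass \frac{M}{IM} \subseteq V(\fa + \Ann M)$. A sandwich
\[
\sqrt{\fa + \Ann M} \;\supseteq\; \bigcap_{\fp \in V(\fa) \cap \Ass \frac{M}{IM}} \fp \;\supseteq\; \bigcap_{\fp \in \Min \Ass \frac{M}{\fa M}} \fp \;=\; \sqrt{\fa + \Ann M}
\]
then forces all three to coincide; by the construction in the proof of \ref{t7}(i), the middle term is exactly $\fa'$.

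Finally, Proposition \ref{t7}(ii) yields $\fa' = \bigcap_{\fp \in \Ass H^t_{\fa} (M)} \fp$, which is the claimed identity. There is no substantive obstacle: the corollary is a formal packaging of \ref{t7}(ii) with Lemma \ref{l1}, and the hypothesis that $\fa$ be linked over $M$ enters only through the inclusion $\Ass \frac{M}{\fa M} \subseteq \Ass \frac{M}{IM}$.
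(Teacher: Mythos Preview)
Your approach is essentially the paper's: identify $\sqrt{\fa+\Ann M}$ with the ideal $\fa'$ of Proposition~\ref{t7}(i) and then quote Proposition~\ref{t7}(ii). The paper compresses the first step into a citation of \cite[2.4]{JS2}(v), whereas you unpack it via Lemma~\ref{l1} and the Cayley--Hamilton argument; the content is the same.

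One slip to fix: in your displayed sandwich the inclusion signs are reversed. From $\Min\Ass\frac{M}{\fa M}\subseteq V(\fa)\cap\Ass\frac{M}{IM}\subseteq V(\fa+\Ann M)$ one gets, upon intersecting,
\[
\sqrt{\fa+\Ann M}\;\subseteq\;\bigcap_{\fp\in V(\fa)\cap\Ass\frac{M}{IM}}\fp\;\subseteq\;\bigcap_{\fp\in\Min\Ass\frac{M}{\fa M}}\fp\;=\;\sqrt{\fa+\Ann M},
\]
not the chain with $\supseteq$ that you wrote. The conclusion is unaffected since the endpoints coincide, but as written each individual inequality is false.
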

\begin{proof}
  It follows from \cite[2.4]{JS2}(v) and the above proposition.
\end{proof}
The following theorem provides some conditions in order to have $cd(\fa, M)< \dim M.$
\begin{thm}\label{t1}
 Let $(R,\fm)$ be local and $\underline{\fx} = x_1,...,x_t$ be an
 $M$-regular sequence of length $t$. Assume that $H^{\dim M}_{\fp} (M) =0$ for all $\fp \in \Ass_R \frac{M}{(\underline{\fx})M}$. Then, $H^{\dim M}_{\fa} (M) =0$ for any ideal $\fa \supseteq (\underline{\fx})$ with $\grad_M \fa = t.$
 \end{thm}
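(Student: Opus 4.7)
The plan is to invoke the \LH~together with a simple prime-avoidance step. To begin, observe that since $\underline{\fx}$ is an $M$-regular sequence of length $t=\grad_M\fa$ contained in $\fa$, the grade condition forbids any element of $\fa$ from being regular on $M/(\underline{\fx})M$; otherwise one could extend $\underline{\fx}$ to an $M$-regular sequence of length $t+1$ inside $\fa$. Hence $\fa\subseteq\bigcup_{\fp\in\Ass_R(M/(\underline{\fx})M)}\fp$, and since $\Ass_R(M/(\underline{\fx})M)$ is finite, prime avoidance produces $\fp_0\in\Ass_R(M/(\underline{\fx})M)$ with $\fa\subseteq\fp_0$.

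Next, I would apply the \LH~(passing if necessary to the $\fm$-adic completion $\hat R$ of $R$) to $\fp_0$ and $M$. The hypothesis $H^{\dim M}_{\fp_0}(M)=0$ then asserts that $\dim \hat R/(\fp_0\hat R+\fq)\geq 1$ for every $\fq\in\Ass_{\hat R}\hat M$ with $\dim \hat R/\fq=\dim M$.

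From the inclusion $\fa\subseteq\fp_0$ we obtain $\fa\hat R+\fq\subseteq\fp_0\hat R+\fq$, and hence $\dim \hat R/(\fa\hat R+\fq)\geq \dim \hat R/(\fp_0\hat R+\fq)\geq 1$ for every such $\fq$. Applying the converse direction of the \LH~to $\fa$ and $M$ then yields $H^{\dim M}_\fa(M)=0$, as desired. The whole argument is essentially bookkeeping around the monotonicity of the \LH~condition under ideal inclusion; the only (minor) obstacle is identifying the correct prime $\fp_0$ via prime avoidance, after which nothing further is required.
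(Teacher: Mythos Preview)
Your argument is correct. The prime-avoidance step is exactly right: $\grad_M\fa=t$ forces $\fa$ into the union of the associated primes of $M/(\underline{\fx})M$, and then a single containing prime $\fp_0$ suffices. The monotonicity of the Lichtenbaum--Hartshorne vanishing criterion under ideal inclusion does the rest. The only point worth flagging is that you are invoking the \emph{module} version of the \LH\ (the characterisation of $H^{\dim M}_{\fa}(M)=0$ in terms of $\dim\hat R/(\fa\hat R+\fq)$ for $\fq\in\Assh_{\hat R}\hat M$), which is a genuine extension of the classical ring-theoretic statement; you should cite it explicitly rather than gesture at ``passing to the completion''.

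The paper's proof is quite different and deliberately avoids the \LH. It first treats radical ideals $\fb\in S_{((\underline{\fx});M)}$, which by \cite[2.4]{JS2} are finite intersections $\fp_1\cap\cdots\cap\fp_l$ of primes in $\Ass_R(M/(\underline{\fx})M)$; an induction on $l$ using the Mayer--Vietoris surjection $H^n_{\fp_1}(M)\oplus H^n_{\fc}(M)\twoheadrightarrow H^n_{\fb}(M)$ gives $H^n_{\fb}(M)=0$. For a general $\fa$, Proposition~\ref{t7} produces a radical $\fb\in S_{((\underline{\fx});M)}$ with $\fa\subseteq\fb$, and then an elementary induction (adding one generator at a time, using the long exact sequence of \cite[8.1.2]{BS} together with the fact that $H^n_{\fa}(M)$ is supported only at $\fm$) yields a surjection $H^n_{\fb}(M)\twoheadrightarrow H^n_{\fa}(M)$.

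In short: the paper reproves by hand the monotonicity you extract from the \LH, trading a deep external input for a longer but self-contained argument; your route is shorter and makes the mechanism transparent, at the cost of invoking a nontrivial theorem. Both are valid, and indeed the paper's own Remark~\ref{r2} acknowledges the \LH\ perspective immediately afterwards.
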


\begin{proof}

 Let $n := \dim M$ and $\fb \in S_{((\underline{\fx});M)}$ be a radical ideal. Then, by \cite[2.4]{JS2}, $\fb = \cap^l_{i=1} \fp_i$ for some $l\in\mathbb{N}$ and $\fp_1,...,\fp_l\in \Ass \frac{M}{(\underline{\fx})M}$. By the assumption, $H^n_ {\fb}(M)=0$ when $l= 1$. In the case $l>1$, set $\fc = \cap^l_{i=2} \fp_i$. Then, using the Mayer-Vietoris sequence

$$... \longrightarrow H^n_ {\fp_1} (M) \oplus H^n_ {\fc} (M) \longrightarrow H^n_ {\fb}(M)\longrightarrow 0$$ and the inductive hypothesis, we have $H^n_ {\fb} (M) = 0$.

Now, let $\fa \supseteq (\underline{\fx})$ be an ideal with $\grad_M \fa = t$. Then, by \ref{t7}, there exists a radical ideal $\fb\in S_{((\underline{\fx});M)}$ such that $\fa \subseteq \fb$. Let $\fb = \fa +(y_1,...,y_m).$ Then, using induction on $m$ and \cite[8.12]{BS}, it is straight forward to see that there exists an onto homomorphism $H^n_ {\fb} (M) \longrightarrow H^n_ {\fa}(M)\longrightarrow 0,$ and the result follows.

\end{proof}

\begin{rem}\label{r2}
Let the situations be as in the above theorem and assume, in
addition, that $(R,\fm)$ is complete. Let $\fa \supseteq
(\underline{\fx})$ be an ideal with $\grad_M \fa = t.$ Then, the \LH
\ shows that $\fa$ can not
 be coprimary with a member of $\Assh M$,i.e. there is no $\fp \in \Assh M$ with $\sqrt{\fa + \fp}= \fm$.
 \end{rem}

\begin{cor}\label{c1}
There is a linked ideal $\fb$ over
$R$ such that $H^{\dim R}_{\fb} (R) \neq 0.$ 
\end{cor}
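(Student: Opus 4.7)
The statement should be read as the contrapositive of the implication from the introduction: if there exists any ideal $\fa$ with $H^{\dim R}_{\fa}(R) \neq 0$, then there exists a linked ideal $\fb$ with $H^{\dim R}_{\fb}(R) \neq 0$. (Under the local hypothesis inherited from Theorem \ref{t1}, the starting ideal $\fa = \fm$ is always available by Grothendieck's non-vanishing theorem.) The plan is to combine Proposition \ref{t7} with the surjection constructed inside the proof of Theorem \ref{t1} to promote the nonvanishing from $\fa$ to a linked extension $\fb \supseteq \fa$.

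First I would fix an ideal $\fa$ with $H^{\dim R}_{\fa}(R) \neq 0$ and choose a maximal $R$-regular sequence $\underline{\fx} = x_1, \ldots, x_t$ in $\fa$, so $t = \grad \fa$. Proposition \ref{t7}(i), applied with $M = R$, then produces a radical ideal $\fb \in S_{((\underline{\fx}); R)}$ containing $\fa$ with $\grad \fb = t$. The defining identity of $S_{((\underline{\fx}); R)}$ gives $(\underline{\fx}) \subsetneq \fb$ together with $\fb = (\underline{\fx}) :_R [(\underline{\fx}) :_R \fb]$, so setting $\fb' := (\underline{\fx}) :_R \fb$ yields $\fb \sim_{((\underline{\fx}); R)} \fb'$, which exhibits $\fb$ as a linked ideal over $R$ in the sense of Definition \ref{F1}.

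Next I would extract the nonvanishing of $H^{\dim R}_{\fb}(R)$ from that of $H^{\dim R}_{\fa}(R)$. Writing $\fb = \fa + (y_1, \ldots, y_m)$ and reapplying the inductive argument at the end of the proof of Theorem \ref{t1} (based on \cite[8.12]{BS}), one constructs an onto homomorphism $H^{\dim R}_{\fb}(R) \longrightarrow H^{\dim R}_{\fa}(R) \longrightarrow 0$. Since $H^{\dim R}_{\fa}(R) \neq 0$, this forces $H^{\dim R}_{\fb}(R) \neq 0$, and $\fb$ is the desired linked ideal.

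The main subtlety is verifying that membership in $S_{((\underline{\fx}); R)}$ really implies linkage in the sense of Definition \ref{F1}. This reduces to the observation that $\fb' := (\underline{\fx}) :_R \fb$ automatically satisfies $(\underline{\fx}) :_R \fb' = \fb$ via the defining double-colon identity of $S_{((\underline{\fx}); R)}$, while the inclusions $(\underline{\fx}) \subseteq \fb$ and $(\underline{\fx}) \subseteq \fb'$ are immediate from $\underline{\fx} \cdot \fb \subseteq (\underline{\fx})$; once this is noted, both directions of the linkage relation are verified and the corollary follows.
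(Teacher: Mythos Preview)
Your proposal is correct and follows the route the paper intends: since Corollary \ref{c1} is stated without proof immediately after Theorem \ref{t1}, it is meant as the contrapositive of that theorem combined with Grothendieck non-vanishing at $\fa=\fm$, and your argument unpacks exactly this by using Proposition \ref{t7} to produce the radical linked overideal $\fb$ and then reusing the surjection $H^{\dim R}_{\fb}(R)\twoheadrightarrow H^{\dim R}_{\fa}(R)$ from the end of the proof of Theorem \ref{t1}. The only cosmetic slip is that Proposition \ref{t7}(i) manufactures its own $I$ (possibly squaring the last generator to avoid $I$ being prime), so you should let it supply $I$ rather than fixing $\underline{\fx}$ in advance; this does not affect the substance of the argument.
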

\bibliographystyle{amsplain}

\end{document}